\newtheorem{lemma}{Lemma}
\newtheorem*{theorem*}{Main Result}
\begin{document}

\begin{center}
\LARGE
\textbf{On the Number of Disconnected Character Degree Graphs Satisfying Pálfy's Inequality}\\[20pt]
\small
\textbf {Mark L. Lewis, Andrew Summers}\\[20pt]

Department of Mathematical Sciences, Kent State University, Kent, OH 44242, USA \\lewis@math.kent.edu\\[10pt]

Department of Mathematical Sciences, Kent State University, Kent, OH 44242, USA \\ asumme19@kent.edu\\[20pt]
\end{center}

\begin{abstract}
Let $G$ be a finite solvable group with disconnected character degree graph $\Delta(G)$. Under these conditions, it follows from a result of Pálfy that $\Delta(G)$ consists of two connected components. Another result of Pálfy's gives an inequality relating the sizes of these two connected components. In this paper, we calculate the number of possible component size pairs that satisfy Pálfy's inequality. Additionally, for a fixed positive integer $n$, the number of distinct graph orders for which exactly $n$ component size pairs satisfy Pálfy's inequality is shown.\vskip 5mm

\textbf{Keywords:} character degree graphs, Pálfy's inequality\vskip 5mm

\textbf{Mathematics Subject Classification:} 20C15, 05C25

\end{abstract}

\section{Introduction}\label{s:1}

Throughout this paper, $G$ is a finite solvable group. Let Irr($G$) denote the set of irreducible ordinary characters of $G$ and let cd($G$)$=\{\chi(1):\chi\in$ Irr($G$)$\}$, be the set of irreducible character degrees of $G$. We write $\rho(G)$ for the set of primes dividing the elements of cd($G$). The character degree graph of $G$, denoted $\Delta(G)$, is defined as the graph having $\rho(G)$ as its vertex set; there is an edge between primes $p,q\in\rho(G)$ if $pq$ divides $\chi(1)\in$ cd($G$) for some character $\chi\in$ Irr($G$). Note that each $p\in\rho(G)$ corresponds to a vertex $v$ of $\Delta(G)$, and it is commonplace to refer to them interchangeably. Recall also that the order of a graph is the cardinality of its vertex set. Character degree graphs have been studied in many places (for a summary see \cite{MR2397031} and for background see \cite{MR0460423} and Chapter 5, Section 18 of \cite{0521397391} ).\\

In the main theorem of \cite{MR1684505}, Pálfy shows that if $G$ is a solvable group, then for any three vertices of $\rho(G)$, at least two must be adjacent in $\Delta(G)$. Equivalently, the complement graph $\overline{\Delta}(G)$ contains no 3-cycles. Graphs which satisfy this result are commonly said to satisfy Pálfy's condition, and doing so imposes a great deal of structure on the character degree graphs of solvable groups. For example, when $G$ is solvable and $\Delta(G)$ is disconnected, then by Pálfy's condition, $\Delta(G)$ must have exactly two connected components, each of which is a complete graph. Because of this, we see that each disconnected $\Delta(G)$ is completely determined by the sizes of its two connected components. Throughout this paper, we will often refer to pairs of connected component sizes as a stand-in for the associated disconnected character degree graph.\\

In Theorem 3 of \cite{MR1877790}, Pálfy proves another result regarding the sizes of these two complete connected components. Suppose that the two connected components have sizes $a$ and $b$, and without loss of generality, suppose $a\leq b$, then it was shown that $b\geq 2^a-1$. We will refer to this result as Pálfy's inequality, and it motivates our main result.\\

\begin{theorem*}
    Let $\alpha$ be any positive integer. Then there are exactly $2^{\alpha}+1$ possible graph orders for which exactly $\alpha$ pairs of connected component sizes satisfy Pálfy's inequality.
\end{theorem*}\vspace{.20cm}

As of the time of writing, it is still unknown whether all possible pairs of component sizes actually occur within $\Delta(G)$ for some finite solvable group $G$. Note that while the results of this paper are motivated by the study of irreducible character degrees of finite solvable groups, they are actually more combinatorial and number-theoretic in nature.\\

We begin Section 2 with a rough count of the number of possible pairs of connected component sizes for a graph of fixed order $n$. Afterwards, we define a function, in terms of the order of $\Delta(G)$, which counts the number of pairs of component sizes that satisfy Pálfy's inequality, and prove that this count is correct. Finally, we prove the main result. In Section 3, calculations are given to illustrate the surprisingly small number of possible disconnected graphs which can occur as the order of the character degree graph gets large.\\

We note that the work in this paper was completed by the second author as a Ph.D. candidate under the supervision of the first author at Kent State University. The contents of this paper may appear as part of the second author’s Ph.D. dissertation.

\section{Results}\label{s:2}

We begin this section with a simple result that holds for more general graphs.

\begin{lemma}
    Let $\Gamma$ be a graph of order $n\geq 2$. Suppose $\Gamma$ is disconnected with two connected components. Then the number of distinct possible pairs of component sizes is $\lfloor n/2\rfloor$.
\end{lemma}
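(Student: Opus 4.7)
The plan is to translate the problem into counting integer solutions of a simple linear equation. Since $\Gamma$ has order $n$ and is the disjoint union of exactly two connected components of sizes $a$ and $b$, we have $a + b = n$ with $a, b \geq 1$. A pair of component sizes is just an unordered pair $\{a, b\}$, so without loss of generality I may assume $a \leq b$.

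Next, I would rewrite the constraint $a \leq b$ as $a \leq n - a$, i.e.\ $a \leq n/2$. Combined with $a \geq 1$, this shows that the pairs of component sizes are in bijection with integers $a$ satisfying $1 \leq a \leq n/2$. The number of such integers is exactly $\lfloor n/2 \rfloor$.

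Finally, I would observe that conversely every integer $a$ with $1 \leq a \leq \lfloor n/2 \rfloor$ gives rise to a valid pair $(a, n-a)$ of connected component sizes (e.g.\ take the disjoint union of a complete graph on $a$ vertices and a complete graph on $n - a$ vertices), so the count is tight rather than merely an upper bound. There is no real obstacle here; the only minor point requiring attention is to confirm that the cases $n$ even and $n$ odd are both handled correctly by the floor, which they are since when $n = 2k$ the pair $(k, k)$ is counted once and when $n = 2k+1$ the largest admissible value of $a$ is $k = \lfloor n/2 \rfloor$.
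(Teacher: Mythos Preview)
Your proposal is correct and follows essentially the same approach as the paper: both reduce to counting integers $a$ with $1 \leq a \leq n/2$, the paper via an explicit even/odd case split and you via the floor function directly. Your added remark that each such pair is actually realized (by a disjoint union of complete graphs) is a small bonus the paper omits, but the core argument is the same.
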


\begin{proof} If $n$ is odd, then $n=2m+1$ for some positive integer $m$. Let $(a,b)$ represent a possible partition of $n$ into two connected components of sizes $a$ and $b$. We then have that the total number of such partitions is given by the set $\mathcal{C}=\{(a,2m+1-a):a\in\{1,\cdots,m\}\}$. Thus we have $|\mathcal{C}|=m=\lfloor m+\frac{1}{2}\rfloor=\lfloor (2m+1)/2\rfloor=\lfloor n/2\rfloor$.\\

If $n$ is even, then $n=2m$ for some positive integer $m$, and a similar counting argument yields $\mathcal{C}=\{(a,2m-a):a\in\{1,\cdots,m\}\}$. It follows that $|\mathcal{C}|=m=\lfloor 2m/2\rfloor=\lfloor n/2\rfloor$, as desired.

\end{proof}

We now look to refine the previous result in the context of character degree graphs of finite solvable groups. For a given order $n$, we define the function $c(n)$ (for ``connected components") to denote the number of distinct ways in which a graph of order $n$ can be partitioned into connected component sizes which satisfy Pálfy's inequality. Namely, we have the following result:\vspace{.15in}

\begin{lemma}
    Let $n\geq 2$ be a positive integer. Then $$c(n)=\max\{\alpha\in\mathbb{Z}^{+}: n\geq 2^{\alpha}+\alpha+1\}$$ yields the number of ways a graph of order $n$ can be partitioned into two connected components whose sizes satisfy Pálfy's inequality.
\end{lemma}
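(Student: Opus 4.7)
The plan is to parametrize the valid partitions by the smaller component size $a$, translate Pálfy's inequality into a single-variable condition on $a$, and exploit monotonicity of the resulting expression to identify the count with the stated maximum.

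First, I would substitute $b = n - a$ into Pálfy's inequality, using the convention $a \leq b$ and $a + b = n$. After rearranging, the constraint becomes an inequality of the form $n \geq 2^{a} + a + 1$. Observing that the function $f(a) = 2^{a} + a + 1$ is strictly increasing for $a \geq 1$, since its forward difference $f(a+1) - f(a) = 2^{a} + 1$ is positive, I would conclude that the set of positive integers $a$ satisfying $n \geq f(a)$ is an initial segment $\{1, 2, \ldots, \alpha^{*}\}$, where $\alpha^{*} = \max\{\alpha \in \mathbb{Z}^{+} : n \geq 2^{\alpha} + \alpha + 1\}$. The count of valid $a$ then equals $\alpha^{*}$, yielding the stated formula.

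Second, to guarantee that each such $a$ really corresponds to a legitimate pair $(a, n - a)$ with $a \leq n - a$, I would verify that the ordering constraint $a \leq n/2$ is automatically implied by $n \geq 2^{a} + a + 1$. This reduces to the elementary bound $2^{a} + a + 1 \geq 2a$, which follows immediately from $2^{a} \geq a + 1$ for all $a \geq 1$ via a one-line induction. In particular, Lemma~1 is not even needed once this observation is in hand, since Pálfy's inequality alone is sharper than the trivial partition bound.

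The main obstacle is really only the careful translation of Pálfy's inequality into the precise form $n \geq 2^{\alpha} + \alpha + 1$, together with checking the boundary cases where the maximum might fail to be attained at a large value of $\alpha$ (for instance when $n$ is close to the minimum value allowing any valid partition). Beyond this bookkeeping, the proof is a textbook application of the principle that a monotone condition on a positive integer parameter is controlled entirely by its largest feasible value, so the count of solutions equals that maximum.
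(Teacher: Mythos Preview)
Your approach is essentially the paper's: parametrize by the smaller part $a$, rewrite P\'alfy's inequality as a bound on $n$ in terms of $a$, observe the bound is strictly increasing in $a$, and conclude that the feasible values of $a$ form an initial segment whose length equals the stated maximum. You even add a check the paper leaves implicit, namely that the inequality on $n$ automatically forces $a\le n-a$.

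There is, however, an arithmetic slip in your first step. Substituting $b=n-a$ into $b\ge 2^{a}-1$ gives $n-a\ge 2^{a}-1$, hence $n\ge 2^{a}+a-1$, not $n\ge 2^{a}+a+1$. The paper's own proof obtains the correct constant $-1$; the ``$+1$'' in the displayed formula of the lemma is a typographical error in the statement (compare the computation $n_2-n_1=(2^{\alpha+1}+(\alpha+1)-1)-(2^{\alpha}+\alpha-1)$ in the proof of the Main Result, and Table~2, where $c(2)=1$ requires $2\ge 2^{1}+1-1$). So your plan is sound, but executing it as written propagates the typo rather than proving the intended result; with $-1$ in place of $+1$ throughout, your argument and the paper's coincide.
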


\begin{proof} Let $\Gamma$ be a graph of order $n$ and let $\alpha$ and $\beta$ denote the sizes of the two connected components of $\Gamma$. Without loss of generality, suppose $\alpha\leq\beta$. To satisfy Pálfy's inequality, we must have $\beta\geq 2^{\alpha}-1$. Noting that $n=\alpha+\beta$, it follows that we must have $n\geq 2^{\alpha}+\alpha-1$. It is not hard to see that the right side of this inequality is strictly increasing in $\alpha$, and so if $\alpha=a$ satisfies the inequality, we have that $a-1, a-2, \cdots, 1$ are also solutions. Thus, for a given order $n$, the total number of pairs of component sizes $(\alpha,\beta)$ which satisfy Pálfy's inequality is given by $\max\{\alpha\in\mathbb{Z}^+:n\geq 2^{\alpha}+\alpha-1\}$. Since this is exactly the definition of $c(n)$, the proof is complete.
    
\end{proof}

With the notation established by Lemma 2 in mind, it follows that for a disconnected character degree graph of order $n$, once $c(n)$ is known, the possible component size pairs are $(n-1,1), (n-2,2), \cdots, (n-c(n),c(n))$. We now state the main result in a slightly different form:\vspace{.15in}

\begin{theorem*}
    Let $\alpha$ be any positive integer. There are exactly $2^{\alpha}+1$ positive integers $n\geq 2$ which satisfy $c(n)=\alpha$.
\end{theorem*}

\begin{proof}
    By definition, $c(n)$ is a non-decreasing function of $n$. It follows that the minimal $n$ such that $c(n)=\alpha$ occurs when equality is achieved in the definition; that is, when $n=2^{\alpha}+\alpha+1$.\\

    Let $n_1,n_2\in\mathbb{Z}^+$ be minimal such that $c(n_1)=\alpha$ and $c(n_2)=\alpha+1$. It follows that:

    \begin{align*}
        n_2-n_1 &= (2^{\alpha+1}+(\alpha+1)-1)-(2^{\alpha}+\alpha-1) \\
        &= 2^{\alpha+1}-2^{\alpha}+1 \\
        &= 2^{\alpha}(2-1)+1 \\
        &= 2^{\alpha}+1.
    \end{align*}

    Since $n_1,n_2$ were chosen minimally, it follows that there are exactly $2^{\alpha}+1$ positive integers satisfying $c(n)=\alpha$.
    
\end{proof}

\section{Calculations}\label{s:3}
In this section, we provide calculations to better illustrate the consequences of our results. We say that a graph $\Gamma$ ``occurs" as a character degree graph if $\Gamma=\Delta(G)$ for some group $G$. A common question regarding character degree graphs concerns which graphs of a fixed order can occur as $\Delta(G)$ (for example, see \cite{MR2125077} or \cite{MR3988190}). How many possible disconnected graphs of order $n$ could occur as $\Delta(G)$ when $n$ gets large? Calculating $c(n)$ shows that the number of possibilities is surprisingly few, as illustrated in Table 1:

    \bgroup
    \def\arraystretch{1.5}
    \begin{table}[htb]
    \caption*{Table 1}
    \noindent\begin{minipage}[c]{0.5\textwidth}
    \centering
    \begin{tabular}{c|c}
        $n$ & $c(n)$ \\\hline
        $10$ & 3\\\hline
        $100$ & 6\\\hline
        $1,000$ & 9\\\hline
        $10,000$ & 13\\\hline
        $100,000$ & 16\\\hline
        $1,000,000$ & 19
    \end{tabular}
    \end{minipage}
    \begin{minipage}[c]{0.5\textwidth}
    \centering
    \begin{tabular}{c|c}
        $n$ & $c(n)$ \\\hline
         $10^9$ & 29\\\hline
        $10^{10}$ & 33\\\hline
        $10^{15}$ & 49\\\hline
        $10^{20}$ & 66\\\hline
        $10^{25}$ & 83\\\hline
        $10^{30}$ & 99 
    \end{tabular}
    \end{minipage}
    \end{table}
    \egroup

\FloatBarrier Note that when considering $10^{30}$ vertices, Lemma 1 provides $5\times 10^{29}$ possible pairs of component sizes, yet, computing $c(n)$ reveals that the number of possible component size pairs that can occur as $\Delta(G)$ is still fewer than one hundred.\\

Additionally, for a fixed number of possible disconnected character degree graphs, our main result yields the possible orders of graph that must be considered, as shown in Table 2:

    \bgroup
    \def\arraystretch{1.5}
    \begin{table}[htb]
    \caption*{Table 2}
    \centering
    \begin{tabular}{c|c}
        Number of possibilities for disconnected $\Delta(G)$ & Possible graph orders \\\hline
        $1$ & $2-4$\\\hline
        $2$ & $5-9$\\\hline
        $3$ & $10-18$\\\hline
        $4$ & $19-35$\\\hline
        $5$ & $36-68$\\\hline
        $6$ & $69-133$\\\hline
        $7$ & $134-262$\\\hline
        $8$ & $263-519$\\\hline
        $9$ & $520-1,032$\\\hline
        $10$ & $1,033-2,057$
    \end{tabular}
    \end{table}
    \egroup

For example, when considering which graphs of order between 36 and 68 can occur as $\Delta(G)$, there will be exactly 5 disconnected possibilities to check. Moreover, one would not have to check more than 10 possible disconnected graphs unless the character degree graphs being considered had at least 2,058 vertices.\\

\printbibliography

\end{document}